\newtheorem{prop}{Proposition}
\newtheorem{conj}[prop]{Conjecture}
\theoremstyle{definition}
\newtheorem{rem}[prop]{Remark} }
\begin{document}
\allowdisplaybreaks

\newcommand{\arXivNumber}{1810.09622}

\renewcommand{\PaperNumber}{115}

\FirstPageHeading

\ShortArticleName{The Full Symmetric Toda Flow and Intersections of Bruhat Cells}

\ArticleName{The Full Symmetric Toda Flow\\ and Intersections of Bruhat Cells}

\Author{Yuri B.~CHERNYAKOV~$^{\dag^1\dag^2\dag^3}$, Georgy I.~SHARYGIN~$^{\dag^1\dag^2\dag^4}$, Alexander S.~SORIN~$^{\dag^2\dag^5\dag^6}$\newline and Dmitry V.~TALALAEV~$^{\dag^1\dag^4\dag^7}$}

\AuthorNameForHeading{Yu.B.~Chernyakov, G.I.~Sharygin, A.S.~Sorin and D.V.~Talalaev}

\Address{$^{\dag^1}$~Institute for Theoretical and Experimental Physics, Bolshaya Cheremushkinskaya 25,\\
\hphantom{$^{\dag^1}$}~117218 Moscow, Russia}
\EmailDD{\href{mailto:chernyakov@itep.ru}{chernyakov@itep.ru}, \href{mailto:sharygin@itep.ru}{sharygin@itep.ru}, \href{mailto:talalaev@itep.ru}{talalaev@itep.ru}}

\Address{$^{\dag^2}$~Joint Institute for Nuclear Research, Bogoliubov Laboratory of Theoretical Physics,\\
\hphantom{$^{\dag^2}$}~141980 Dubna, Moscow region, Russia}
\EmailDD{\href{mailto:sorin@theor.jinr.ru}{sorin@theor.jinr.ru}}

\Address{$^{\dag^3}$~Institute for Information Transmission Problems, Bolshoy Karetny per.19, build.~1,\\
\hphantom{$^{\dag^3}$}~127994, Moscow, Russia}

\Address{$^{\dag^4}$~Lomonosov Moscow State University, Faculty of Mechanics and Mathematics,\\
\hphantom{$^{\dag^4}$}~GSP-1, 1 Leninskiye Gory, Main Building, 119991 Moscow, Russia}

\Address{$^{\dag^5}$~National Research Nuclear University MEPhI (Moscow Engineering Physics Institute),\\
\hphantom{$^{\dag^5}$}~Kashirskoye shosse 31, 115409 Moscow, Russia}

\Address{$^{\dag^6}$~Dubna State University, 141980 Dubna, Moscow region, Russia}

\Address{$^{\dag^7}$~Centre of integrable systems, P.G.~Demidov Yaroslavl State University,\\
\hphantom{$^{\dag^7}$}~150003, 14 Sovetskaya Str., Yaroslavl, Russia}

\ArticleDates{Received July 13, 2020, in final form November 02, 2020; Published online November 11, 2020}

\Abstract{In this short note we show that the Bruhat cells in real normal forms of semisimple Lie algebras enjoy the same property as their complex analogs: \textit{for any two elements~$w$,~$w'$ in the Weyl group $W(\mathfrak g)$, the corresponding real Bruhat cell $X_w$ intersects with the dual Bruhat cell $Y_{w'}$ iff $w\prec w'$ in the Bruhat order on $W(\mathfrak g)$}. Here $\mathfrak g$ is a normal real form of a~semisimple complex Lie algebra $\mathfrak g_\mathbb C$. Our reasoning is based on the properties of the Toda flows rather than on the analysis of the Weyl group action and geometric considerations.}

\Keywords{Lie groups; Bruhat order; integrable systems; Toda flow}

\Classification{22E15; 70H06}

\section{Introduction and the statement of result}
\subsection{General remarks}
Schubert cells in standard flag manifolds and their generalizations to arbitrary Lie groups -- Bruhat cells -- play an important role in algebraic geometry and representation theory. In the complex case their structure is well understood and has been extensively studied in the last 40 years.

It is natural to suppose that the intersection properties (described below) of real Bruhat cells are similar to the ones in the complex case. Although this has been implicitly assumed by many authors, the proof has not been accurately written down yet (or at least we couldn't find a~proper reference). The modest purpose of the present note is to fill this regrettable gap, at least for the normal real forms, and to show the close relation of the full symmetric Toda system with the geometry of Bruhat cells.

\subsection{Preliminaries on Bruhat decomposition}
The Bruhat cells (or Schubert cells) are open dense subsets in algebraic subvarieties in flag spaces (Schubert varieties), enumerated by the elements of the corresponding Weyl group. Namely, \textit{the Schubert cell $X_w^\mathbb C$, corresponding to $w\in W(\mathfrak g_\mathbb C)$}, is equal to the positive Borel subgroup ${\rm B}^+_\mathbb C$ orbit of the element $[w]\in {\rm Fl}(\mathfrak g_\mathbb C)=G_\mathbb C/{\rm B}^+_\mathbb C$. Here $[w]$ is the point in ${\rm Fl}(\mathfrak g_\mathbb C)$ represented by the element $\tilde w$ in the normalizer $N(\mathfrak h)$ of the Cartan subalgebra corresponding to $w\in W(\mathfrak g_\mathbb C)$. Similarly \textit{the dual Schubert cell $Y_w^{\mathbb C}$ corresponding to $w$} is the orbit ${\rm B}^-_\mathbb C\cdot [w]$ of the negative Borel subgroup.

It turns out that the properties of the Bruhat cells are closely related with the combinatorics of the corresponding Weyl group, in particular with the Bruhat order on it. Recall that the Bruhat order in a Coxeter group $W$ with the set $S=\{\sigma_1,\dots,\sigma_N\}$ of generators is the unique partial order on $W$ determined by the following rule: we say that $w$ precedes $w'$ if a minimal word $s(w')$ (in letters $\sigma_k$) representing~$w'$ has the form $\sigma_{i_1}\cdots \sigma_{i_k} \cdot s(w) \cdot \sigma_{j_1}\cdots\sigma_{j_m}$ (where $s(w)$ is a minimal word for~$w$). The lengths of these words are called \textit{lengths} of the elements and denoted $l(w)$, $l(w')$ respectively.

Now using complex geometry and algebraic groups theory one can show that $X^\mathbb C_w\subseteq \overline{X}^\mathbb C_{w'}$ iff $w\prec w'$ in Bruhat order (here $\overline X$ denotes the closure of $X$). Besides this, one can also show that \textit{$Y^\mathbb C_w\cap X^\mathbb C_{w'}\neq \varnothing$ iff $w'\prec w$ in Bruhat order} (see \cite{BrioLa,Ful}). Moreover, in the latter case the intersection is always transversal and the (complex) dimension of this variety is equal to the difference $l(w)-l(w')$. In particular, the complex dimensions of the cells are equal to~$l(w)$ and $\dim_\mathbb C {\rm Fl}(\mathfrak g_\mathbb C)-l(w')$ respectively (see the cited papers).

\subsection{Preliminaries on Cartan decomposition}
Let $\mathfrak g$ be a real Lie algebra whose complexification is equal to $\mathfrak g_\mathbb C$ (i.e., $\mathfrak g$ is a real form of $\mathfrak g_\mathbb C$). Let $G$ be the corresponding real Lie group. For any such \textit{real form} of $\mathfrak g_\mathbb C$ one has the following Cartan decomposition:
\[
\mathfrak g=\mathfrak k\oplus\mathfrak p,
\]
where $\mathfrak k$ is the Lie algebra of a maximal compact subgroup, and $\mathfrak p$ is the algebraic complement of $\mathfrak k$ on which the Killing form of $\mathfrak g$ is positive-definite.

Alternatively (and maybe more properly) one can define the Cartan decomposition with the help of a \textit{Cartan involution}. Namely let $\theta$ be a Lie algebra automorphism $\theta\colon \mathfrak g\to\mathfrak g$, such that $\theta^2=1$ and the quadratic form $B(X,\theta Y)$ where $B$ is the Killing form, is positive definite. Then
\[
\mathfrak k=\{X\in\mathfrak g\,|\, \theta(X)=-X\},\qquad \mathfrak p=\{Y\in\mathfrak p\,|\, \theta(Y)=Y\}.
\]

The Cartan decomposition of a real form $\mathfrak g$ is unique (up to an isomorphism) and one has
\[
[\mathfrak k,\mathfrak k]\subseteq\mathfrak k,\qquad [\mathfrak p,\mathfrak p]\subseteq\mathfrak k,\qquad [\mathfrak k,\mathfrak p]\subseteq\mathfrak p.
\]
Let $\mathfrak a$ be a maximal commutative Lie subalgebra in~$\mathfrak p$. Then \textit{the real form $\mathfrak g$ is called normal or non-split if $\mathfrak a$ is a maximal commutative subalgebra in whole $\mathfrak g$.}

In this case we shall denote this commutative subalgebra by $\mathfrak h$ and call it \textit{Cartan subalgebra}. The normal real form enjoys most of the properties of the complex algebra: one can determine the root system with respect to $\mathfrak h$ and decompose $\mathfrak g$ into root subspaces:
\[
\mathfrak g=\mathfrak h\oplus\sum_{\alpha\in\Phi}\mathfrak g_\alpha,
\]
where $\alpha\in\Phi\subset\mathfrak h^*$ are the roots and $\mathfrak g_\alpha$ are the root subspaces, i.e.,
\[
\mathfrak g_\alpha=\{X\in\mathfrak g\,|\, [H,X]=\alpha(H)X,\, H\in\mathfrak h\}.
\]
In the case of normal real form these subspaces are $1$-dimensional and fixing a basis in $\mathfrak h$ so that one can tell which roots are positive and which are negative, one has the following decompositions of $\mathfrak k$ and $\mathfrak p$ in the corresponding Cartan decomposition:
\begin{equation}\label{eq:2}
\mathfrak k=\sum_{\alpha\in\Phi_+}\mathbb R(e_\alpha+e_{-\alpha}),\qquad
\mathfrak p=\mathfrak h\oplus\sum_{\alpha\in\Phi_+}\mathbb R(e_\alpha-e_{-\alpha}),
\end{equation}
where $\Phi_+$ is the set of positive roots and $e_\alpha$, $e_{-\alpha}$ are basis vectors in $\mathfrak g_\alpha$, $\mathfrak g_{-\alpha}$ such that \mbox{$e_{-\alpha}=\theta(e_\alpha)$}. It is easy to see that in this case the Cartan decomposition of the corresponding complex Lie algebra $\mathfrak g_\mathbb C$ is given by the complexification of the corresponding elements of~$\mathfrak g$.

The Weyl group of $\mathfrak g$, which coincides with the Weyl group of $\mathfrak g_\mathbb C$, can now be defined as the quotient group $W(\mathfrak g)=N_K(\mathfrak h)/Z_K(\mathfrak h)$, where $N_K(\mathfrak h)$ and $Z_K(\mathfrak h)$ are the normalizer and the centralizer of $\mathfrak h$ in the maximal compact subgroup $K$ of $G$, corresponding to $\mathfrak k$. It follows from the maximality of $\mathfrak h$ that these groups are discrete and that $Z_K(\mathfrak h)$ is a normal subgroup in it.

\subsection{Main result}
One can define real Bruhat cells $X_w$ and dual real Bruhat cells $Y_w$ in ${\rm Fl}(G)=G/{\rm B}^+=K/Z_K(\mathfrak h)$ (where ${\rm B}^+$ is the real Borel subgroup) as the orbits of $[w]$ with respect to ${\rm B}^+$ and ${\rm B}^-$. In what follows below, we shall show that the following proposition is true.

\begin{prop}
The intersection $X_w\cap Y_{w'}$ is nonempty if and only if $w'\prec w$ in Bruhat order; moreover, if it is not empty, its (real) dimension is equal to the difference $l(w)-l(w')$.
\end{prop}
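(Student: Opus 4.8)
The plan is to read the Proposition off from the dynamics of the Toda flow on the compact real flag manifold $Fl(G)=K/Z_K(\mathfrak h)$, converting the intersection problem into a question about connecting trajectories. Fix regular elements $\Lambda,H_0\in\mathfrak h$ in the dominant chamber and identify $Fl(G)$ with the adjoint orbit of $\Lambda$ through $[g]\mapsto L:=\mathrm{Ad}_{g^{-1}}\Lambda$; the Toda flow generated by $H_0$ is then, up to sign, the gradient flow of $f([g])=B(H_0,L)$ for a suitable $K$-invariant metric. I would first record that the critical points of $f$ are exactly the points $[w]$, $w\in W(\mathfrak g)$ (those $[g]$ with $L\in\mathfrak h$), that $f$ is Morse, and that it is strictly monotone along every nonconstant trajectory. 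Writing $W^{\mathrm u}([w])$, $W^{\mathrm s}([w])$ for the unstable and stable manifolds of $[w]$ (points whose trajectory tends to $[w]$ as $t\to-\infty$, resp. $t\to+\infty$), the entire argument rests on the identifications $X_w=W^{\mathrm u}([w])$ and $Y_{w'}=W^{\mathrm s}([w'])$. Granting these, $X_w\cap Y_{w'}$ is precisely the set of trajectories leaving $[w]$ and arriving at $[w']$, so the statement splits into a nonemptiness criterion and a dimension count.

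To identify the cells with the invariant manifolds I would linearise the flow at $[w]$. The tangent space $T_{[w]}Fl(G)$ is the $w$-translate of $\mathfrak k=\sum_{\alpha\in\Phi_+}\mathbb R(e_\alpha+e_{-\alpha})$ (cf. \eqref{eq:2}), and a direct computation shows that the Hessian of $f$ at $[w]$ acts on the direction labelled by $\alpha$ with eigenvalue proportional to $\alpha(H_0)\cdot(w\alpha)(\Lambda)$. With $\Lambda,H_0$ dominant this sign is that of $w\alpha$, so the expanding directions are indexed by the roots $\alpha\in\Phi_+$ with $w\alpha\in\Phi_-$, whose number is exactly $l(w)$. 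Hence the Morse index of $[w]$ equals $l(w)$, and the local unstable manifold is tangent to the orbit direction $\mathfrak n^+\cdot[w]$, $\mathfrak n^+=\sum_{\alpha\in\Phi_+}\mathfrak g_\alpha$, i.e. to the $B^+$-orbit $X_w$. Since the Bruhat cells are invariant under the Toda flow, two flow-invariant submanifolds through $[w]$ with equal tangent spaces there must coincide, giving $X_w=W^{\mathrm u}([w])$ and symmetrically $Y_{w'}=W^{\mathrm s}([w'])$; in particular $\dim X_w=l(w)$ and $\dim Y_{w'}=\dim Fl(G)-l(w')$.

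The dimension formula then follows from the Morse--Smale property of the Toda flow: once $W^{\mathrm u}([w])$ and $W^{\mathrm s}([w'])$ are known to meet transversally, $X_w\cap Y_{w'}$ is a submanifold and
\[
\dim(X_w\cap Y_{w'})=\dim W^{\mathrm u}([w])+\dim W^{\mathrm s}([w'])-\dim Fl(G)=l(w)-l(w')
\]
wherever it is nonempty. For the nonemptiness criterion itself the heart of the matter is to show that the reachability relation of the flow on its fixed points coincides with the strong Bruhat order. For the converse implication I would use that $X_w\cap Y_{w'}\neq\emptyset$ forces $[w']\in\overline{X_w}$ and combine this with the closure relation $\overline{X_w}=\bigsqcup_{v\preceq w}X_v$, read off from the boundary structure of the unstable manifolds, to conclude $w'\prec w$. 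For the direct implication I would treat first the covering case $l(w)-l(w')=1$ by restricting the flow to the rank-one $\mathfrak{sl}_2$-subsystem attached to the relevant simple reflection, where a single connecting orbit is explicit, and then obtain the general case $w'\prec w$ by gluing such orbits along a saturated chain and deforming the resulting broken trajectory, using transversality, into a genuine $(l(w)-l(w'))$-dimensional family inside $W^{\mathrm u}([w])\cap W^{\mathrm s}([w'])$.

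The step I expect to be the main obstacle is precisely this dynamical-combinatorial core: proving the Morse--Smale transversality and the exact coincidence of the flow's reachability with the Bruhat order. Strict monotonicity of $f$ alone only yields the weaker order $f([w'])<f([w])$, which is strictly coarser than $\prec$, so the sharp criterion cannot come from energy considerations and must be extracted from the local structure of the cells and the gluing of connecting orbits. I would try to obtain transversality from the complete integrability of the Toda system (the flow being gradient for a metric compatible with the invariant tori) and to derive the closure relation $\overline{X_w}=\bigsqcup_{v\preceq w}X_v$ from the induced cell decomposition of $Fl(G)$, verifying that its incidences reproduce exactly the covering relations of $W(\mathfrak g)$. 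This is exactly the place where the real case departs from the complex one: the algebro-geometric tools (projectivity of Schubert varieties) are unavailable and get replaced by the asymptotics of the flow.
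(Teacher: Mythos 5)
Your general framework coincides with the paper's (gradient Toda flow on $Fl(G)$, cells identified with unstable/stable manifolds, covering relations handled by rank-one trajectories --- your $\mathfrak{sl}_2$-subsystems are exactly the invariant curves $\gamma_{w,\alpha}$ of Proposition \ref{prop:1dim}), but the core of your argument is a genuine gap, and you half-admit it yourself. Both the dimension formula and the general case of the ``if'' direction in your plan rest on Morse--Smale transversality of $W^{\mathrm u}([w])$ and $W^{\mathrm s}([w'])$ plus a gluing-and-deformation theorem for broken trajectories along a saturated chain; neither is established, and neither follows from the hints you give. Complete integrability and invariance of the metric do not imply transversality, and in fact the Morse--Smale property here is essentially \emph{equivalent} to the proposition being proved (nonemptiness plus the exact dimension $l(w)-l(w')$ of the intersections), so invoking it is circular. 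The same circularity afflicts your ``only if'' direction: the real closure relation $\overline{X_w}=\bigsqcup_{v\preceq w}X_v$, which you propose to ``read off from the boundary structure of the unstable manifolds,'' presupposes knowing which connecting orbits exist --- again the content of the statement. As you yourself note, strict monotonicity of the potential only yields an order coarser than $\prec$, so nothing in your toolkit actually pins down the reachability relation.

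The paper closes both gaps with one external input you never use: the real cells sit inside their complex counterparts, $X_w\subset X_w^{\mathbb C}$ and $Y_{w'}\subset Y_{w'}^{\mathbb C}$, and the complex intersection theory is known. This gives the ``only if'' direction at once and, crucially, the a priori upper bound $\dim\bigl(X_w\bigcap Y_{w'}\bigr)\le l(w)-l(w')$. With that bound, existence of connecting trajectories is obtained not by gluing but by dimension counting: choosing $\Lambda$ so that $e$ is the minimum, the unstable manifold $X_w$ with $l(w)=k+1$ has dimension $k+1$ and is partitioned (off the fixed point) by the forward limits of its trajectories into the sets $X_w\bigcap Y_v$; by induction the pieces with $l(v)\ge 1$ have dimension at most $k$, so the piece $X_w\bigcap Y_e$ must be nonempty and of full dimension $k+1$. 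The general pair $w'\prec w$ is then reduced to this minimal situation by replacing $\Lambda$ with $\Lambda'=Ad_{w'}(\Lambda)$, using $F_\Lambda(\Psi\cdot w)=F_{Ad_w(\Lambda)}(\Psi)$, together with the observation that the cells and their intersections are invariant for the entire linear family of fields $T^\Lambda$ simultaneously (so trajectories produced for $T^{\Lambda'}$ yield trajectories of $T^\Lambda$ on the same invariant surface). If you want to salvage your proposal, replace the transversality-and-gluing step by this inclusion-plus-counting scheme; otherwise you would have to prove Morse--Smale transversality for the Toda flow from scratch, which is precisely what the paper's argument is designed to avoid.
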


The explanation of this fact that we found in literature is based on the geometric description of Schubert cells in terms of matrix ranks, see, e.g.,~\cite{Ful}; it works well only in the case of $G = {\rm SL}_n(\mathbb R)$. The main purpose of this article is to extend this result to other groups by applying the connection of this theory with the full symmetric Toda flow. Namely, we are going to derive the result we need from the general properties of the Toda flow, and not vice-versa: in our previous work we used the properties of Schubert cells in the standard flag variety (i.e., the one associated with ${\rm SL}_n(\mathbb R)$) in order to show that the phase portrait of this dynamical system is completely determined by Bruhat order~\cite{CSS14}.

Also observe that in our other papers \cite{CSS19, CSS17} we used explicit computations to show that similar statements hold for all real groups of rank $2$ and for the non-split real form, ${\rm SO}(2,4)$. In the latter case the role of Weyl group is played by the group $W(\mathfrak g,\mathfrak k)=N_K(\mathfrak a)/Z_K(\mathfrak a)$ where~$N_K$,~$Z_K$ are the (non-discrete) normalizer and centralizer of $\mathfrak a$ in $K$. One can show that $W(\mathfrak g,\mathfrak k)$ is again a Coxeter group, thus we can introduce Bruhat order on it. Flag variety is replaced by $K/Z_K(\mathfrak a)$ and the (dual) Bruhat cells are defined as orbits of Borel subgroups in flag varieties similarly to the usual case. However, in this case the dimensions of the root spaces (with respect to $\mathfrak a$) are not necessarily equal to $1$. This is manifested in the dimensions of the intersections of the cells. Below we shall describe a possible generalization of our main result to this case.

\section{Full symmetric Toda flow and its properies}
\looseness=-1 In this section we recall few properties of the full symmetric Toda system on Lie algebras and flag spaces. For the connection between non-periodic Toda lattice and the geometric aspects of Lie groups see, for example~\cite{CaKo}. Namely, the full symmetric Toda system is the dynamical system on~$\mathfrak p$, defined as follows (we follow the exposition from the papers by de Mari, Pedroni~\cite{deMaPe} and Bloch, Gekhtman \cite{BloGe}): it is the Hamiltonian system with respect to a Poisson structure, pulled to $\mathfrak p$ from $(\mathfrak b^+)^*$ via the Killing form, and with the Hamiltonian $H(L)=\operatorname{Tr}_\mathfrak g\big({\rm ad}^2_\mathfrak g (L)\big)$, $L\in\mathfrak p$.

However, for our purposes it is better to consider only the corresponding Hamiltonian vector field $\tau$ on $\mathfrak p$; it can be written down in the following commutator form:
\[
\tau(L)=[L,M(L)].
\]
Here for
\[
L=X+\sum_{\alpha\in\Phi_+}a_\alpha(e_\alpha+e_{-\alpha})\in\mathfrak p,\qquad X\in\mathfrak h,\qquad a_\alpha\in\mathbb R
\]
(see \eqref{eq:2}) we put
 \[
 M(L)=\sum_{\alpha\in\Phi_+}a_\alpha(e_\alpha-e_{-\alpha})\in\mathfrak k.
 \]
Thus the Toda system can be written in the following Lax form:
\begin{equation}\label{eq:Toda1}
\dot L=[L,M(L)].
\end{equation}
Below we shall refer to the map $M\colon \mathfrak p\to\mathfrak k$ as to the \textit{Toda projector}, and the system~\eqref{eq:Toda1} as the \textit{full symmetric Toda system on $\mathfrak g$}, or by a slight abuse of terminology, just \textit{Toda system on $\mathfrak g$} since it is clear that it depends only on the real form $\mathfrak g$ (conventionally, the term Toda system is used to denote the dynamical system determined by~\eqref{eq:Toda1} on the subspace in~$\mathfrak p$ spanned by $\mathfrak h$ and the simple root spaces).

It turns out that the dynamics of the Toda system \eqref{eq:Toda1} is uniquely determined by the following construction: \textit{it is known that almost all $($i.e., up to a measure $0$ subset$)$ orbits of $K$ on $\mathfrak p$ pass through a unique $($up to the action of $N \in\mathfrak h)$ element of $\mathfrak h$}; on the other hand, the vector field $\tau$ is evidently tangent to these orbits (because it is determined by the adjoint action of $\mathfrak k$). So we choose a generic element $\Lambda\in\mathfrak h$ such that its centralizer coincides with the centralizer of $\mathfrak h$ and consider the vector field $T$ at a point $\Psi\in K$, given by
 \[
 T^\Lambda(\Psi)=-M(\operatorname{Ad}_\Psi(\Lambda))\cdot \Psi,
 \]
i.e., it is the right translation by $\Psi$ of the element $M(\operatorname{Ad}_\Psi(\Lambda))\in\mathfrak k$. Then this vector field gives a~dynamical system $\Psi=\Psi(t)$ on~$K$, such that the solution of Toda system on $\mathfrak p$ through a~generic point $L=L(0)$ is given by the formula
 \[
 L(t)=\operatorname{Ad}_{\Psi(t)}(\Lambda),
 \]
where $\Lambda\in\mathfrak h$ is the eigenmatrix of $L$ (the one fixed above) and $\Psi(t)$ is the trajectory of the field~$T$ on~$K$, such that $\operatorname{Ad}_{\Psi(0)}(\Lambda)=L$. It would probably be more accurate to consider instead of the field~$T$ on~$K$ an analogous field $T'$ on the flag space. But in that case the formula for~$L(t)$ would involve a choice of section $F(G)\to K$.

\subsection{Invariant surfaces of the Toda field}
It is our purpose to study the behaviour of the vector field $\tau$ on $\mathfrak p$ by studying the equivalent fields $T^\Lambda$ on $K$. It has been studied by many authors in the last 25 years so here we shall just give references for the well-known facts and give brief proofs for those which seem to be less known.

One can show (see \cite{BloGe} and \cite{BBR}) that the vector field $T^\Lambda$ on $K$ is gradient. The corresponding potential function $F=F_\Lambda\colon K\to\mathbb R$ is given by
\[
F_\Lambda(\Psi)=\operatorname{Tr}_\mathfrak g(\operatorname{ad}_\mathfrak g(\operatorname{Ad}_\Psi(\Lambda))\operatorname{ad}_\mathfrak g(N)),
\]
for a suitable element $N\in\mathfrak h$. It turns out that $F_\Lambda$ is a Morse function. The construction also involves a~proper choice of the Riemann structure $\langle\,,\,\rangle$ on $K$. We shall not use the exact form of the metric and of the element $N$ here. It is enough to say that the scalar product in question is left-invariant and that in the basis $e_\alpha-e_{-\alpha}$ of the Lie algebra $\mathfrak k$ it coincides with the Killing form up to scalar factors $j_\alpha$ and $N$ is uniquely defined up to a central element of~$\mathfrak g$, look into~\cite{CSS14, deMaPe} for details.

From the very definition of the field $T^\Lambda$ it follows that for a generic $\Lambda$ the only singular points of this field coincide with the group $N_K(\mathfrak h)$ or with the set of elements of $W(\mathfrak g)$ on the level of the flag space ${\rm Fl}(G)$. Further, one can show that there always exists a vast family of invariant (with respect to $T^\Lambda$) curves in $K$, i.e., of $1$-dimensional invariant submanifolds of the Toda flow. In fact, it is possible to give an explicit description of such curves. To this end we let $w\in N_K(\mathfrak h)\subset K$ be a point and let us fix a root $\alpha\in\Phi_+$. Consider the smooth path $\gamma_w=\gamma_{w,\alpha}$ through the point $w$ on $K$ given by
 \[
 \gamma_w(t)=\exp{(t(e_\alpha-e_{-\alpha}))}\cdot w.
 \]
 \begin{prop} \label{prop:1dim}
The path $\gamma_w$ is tangent to the vector field $T$ on $K$.
 \label{parallel}
 \end{prop}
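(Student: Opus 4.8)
The plan is to verify tangency pointwise: I will show that at every point $\gamma_w(t)$ the value of the field $T=T^\Lambda$ is a scalar multiple of the velocity vector $\dot\gamma_w(t)$. Writing $\xi:=e_\alpha-e_{-\alpha}\in\mathfrak k$, one computes $\dot\gamma_w(t)=\xi\cdot\gamma_w(t)$, i.e. the velocity is the right translation to $\gamma_w(t)$ of the fixed element $\xi\in\mathfrak k$. On the other hand, by the very definition of the field, $T(\gamma_w(t))=-M(Ad_{\gamma_w(t)}(\Lambda))\cdot\gamma_w(t)$ is the right translation to the same point $\gamma_w(t)$ of the element $-M(Ad_{\gamma_w(t)}(\Lambda))\in\mathfrak k$. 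Hence the statement reduces to the purely Lie-algebraic claim that the curve $M(Ad_{\gamma_w(t)}(\Lambda))$ is, for all $t$, a scalar multiple of $\xi$.

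To this end I would first describe the curve $L(t):=Ad_{\gamma_w(t)}(\Lambda)$ in $\mathfrak p$. Since $w\in N_K(\mathfrak h)$, the element $\Lambda_w:=Ad_w(\Lambda)$ again lies in $\mathfrak h$, so $L(t)=Ad_{\exp(t\xi)}(\Lambda_w)=e^{t\,ad_\xi}(\Lambda_w)$. I then introduce $\eta:=e_\alpha+e_{-\alpha}\in\mathfrak p$ and $h_\alpha:=[e_\alpha,e_{-\alpha}]\in\mathfrak h$, and record the relations $[\xi,\eta]=2h_\alpha$, $[\xi,h_\alpha]=-\alpha(h_\alpha)\,\eta$ and $[\xi,\Lambda_w]=-\alpha(\Lambda_w)\,\eta$, all of which follow from $[H,e_{\pm\alpha}]=\pm\alpha(H)e_{\pm\alpha}$. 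Splitting $\Lambda_w=\frac{\alpha(\Lambda_w)}{\alpha(h_\alpha)}h_\alpha+\Lambda_w^\perp$ with $\alpha(\Lambda_w^\perp)=0$ makes $\Lambda_w^\perp$ a fixed vector of $ad_\xi$, so the whole evolution takes place inside the plane $\mathrm{span}\{h_\alpha,\eta\}$.

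On that plane $ad_\xi$ satisfies $ad_\xi^2=-2\alpha(h_\alpha)\,\mathrm{Id}$, so $e^{t\,ad_\xi}$ acts as a rotation and, with $\omega:=\sqrt{2\alpha(h_\alpha)}$, one obtains $L(t)=\Lambda_w^\perp+\frac{\alpha(\Lambda_w)}{\alpha(h_\alpha)}\cos(\omega t)\,h_\alpha-\frac{\alpha(\Lambda_w)}{\omega}\sin(\omega t)\,\eta$. Applying the Toda projector $M$ now annihilates the two Cartan summands $h_\alpha,\Lambda_w^\perp\in\mathfrak h$ and sends $\eta\mapsto\xi$, whence $M(L(t))=-\frac{\alpha(\Lambda_w)}{\omega}\sin(\omega t)\,\xi$. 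Comparing with $\dot\gamma_w(t)=\xi\cdot\gamma_w(t)$ shows $T(\gamma_w(t))=\frac{\alpha(\Lambda_w)}{\omega}\sin(\omega t)\,\dot\gamma_w(t)$, which is precisely the asserted tangency.

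The one step genuinely requiring care — and the point I expect to be the crux — is the closure of the computation inside a single rank-one subalgebra: I must be sure that evolving $\Lambda_w$ under $ad_\xi$ never produces root vectors outside $\mathfrak g_{\pm\alpha}$ and that the Cartan component stays proportional to $h_\alpha$. This is exactly guaranteed by the one-dimensionality of the root spaces of a normal real form (equivalently, reducedness of the root system), which forces $[e_\alpha,e_\alpha]=0$ and $[e_\alpha,e_{-\alpha}]\in\mathbb R h_\alpha$, so that $\{h_\alpha,e_\alpha,e_{-\alpha}\}$ spans an $\mathfrak{sl}_2$-subalgebra. Without this, additional components could survive the projector $M$ and the field would cease to be proportional to $\xi$.
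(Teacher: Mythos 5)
Your proof is correct and follows essentially the same route as the paper's: both reduce tangency to the purely algebraic claim that $L(t)=Ad_{\gamma_w(t)}(\Lambda)$ evolves inside the $ad_{e_\alpha-e_{-\alpha}}$-invariant subspace spanned by $\mathfrak h$ and $e_\alpha+e_{-\alpha}$, so that the Toda projector sends $L(t)$ to a scalar multiple of $e_\alpha-e_{-\alpha}$, making $T(\gamma_w(t))$ proportional to $\dot\gamma_w(t)$. The only difference is one of explicitness: where the paper establishes the invariance by induction on the exponential series and leaves the proportionality coefficient as an unspecified function $k(t)$, you diagonalize the rank-one evolution as a rotation in the plane $\mathrm{span}\{h_\alpha,\,e_\alpha+e_{-\alpha}\}$ and obtain $k(t)=-\frac{\alpha(\Lambda_w)}{\omega}\sin(\omega t)$ in closed form, a harmless strengthening of the same argument.
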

 \begin{proof} Let us consider the path
 \[
 \gamma_{w,\alpha}(t)=\exp{(t(e_\alpha-e_{-\alpha}))}\cdot w,
 \]
 and the corresponding path in the Lie algebra
 \[
 L(t)=\operatorname{Ad}_{\gamma_{w,\alpha}(t)}(\Lambda)=\operatorname{Ad}_{\exp{(t(e_\alpha-e_{-\alpha}))}}(\Lambda_{w}),
 \]
 where the element $\Lambda_w\in\mathfrak h$ is given by
 \[
 \Lambda_w=\operatorname{Ad}_w(\Lambda).
 \]
Since the action of $\mathfrak h$ on $e_{\pm\alpha}$ is diagonal, we can prove by induction that $L(t)$ is an element in the subspace in $\mathfrak p$ generated by $\mathfrak h$ and ${e}_\alpha + e_{-\alpha}$. Hence $M(L(t))$ is proportional to $e_\alpha-e_{-\alpha}.$ Let us introduce a scalar function $k(t)$ by
 \[
 M(L(t))=k(t)(e_\alpha-e_{-\alpha}).
 \]
 Then the Toda field $T$ at the point $\gamma_w(t)$ is equal to $k(t)(e_\alpha-e_{-\alpha})\cdot \gamma_w(t)$ and is proportional to the tangent field to the trajectory $\gamma_{w,\alpha}(t)$ with coefficient $k(t)$.
 \end{proof}

In addition to these invariant curves we shall need the following result, which one can find, for instance in \cite{deMaPe, Fay}: \textit{Bruhat cells are invariant manifolds of this system}. In fact, one can make this statement more precise, bringing it to the following form.

For every $\Lambda\in\mathfrak h$ we consider the ordering of the elements of $W(\mathfrak g)$ by the values of $F_\Lambda$; let $w\in W(\mathfrak g)$ be the minimal point. Then for any $w'$ we consider the shifted Bruhat cell~$X_{w'}^w$ and the dual shifted Bruhat cell $Y_{w'}^w$, i.e., the orbits $({\rm B}^+)^{w}\cdot[w']$ and $({\rm B}^-)^{w}\cdot[w']$, where $({\rm B}^\pm)^w=w\cdot {\rm B}^\pm\cdot w^{-1}$ and $[w']$ is the point in ${\rm Fl}(G)$ corresponding to $w'$. Then \textit{the shifted and dual shifted Bruhat cells are invariant submanifolds of~$T^\Lambda$. Moreover Bruhat cells are unstable and dual Bruhat cells are stable submanifolds of this system.} See the cited papers for proof.

Before we proceed, let us make the following observation: \textit{Bruhat cells, dual Bruhat cells and their intersections are invariant surfaces of vector fields $T^\Lambda$ for all $\Lambda\in\mathfrak h$}. This follows directly from the previously cited facts: just observe that if two vector fields are tangent to a surface, then all their linear combinations also are and notice that the fields $T^\Lambda$ are linear in~$\Lambda$.

\section{Proof of the main result}
We begin with the following simple observation: \textit{if the intersection $X_w\cap Y_{w'}$ is nonempty, then the elements $w$ and $w'$ are comparable in Bruhat order so that $w'\prec w$.} In fact real Bruhat cells lie inside complex ones and in the complex case intersection of cells is equivalent to the comparison $w'\prec w$. Also observe, that the real dimension of the intersections of the real Bruhat cells with dual real Bruhat cells does not exceed $l(w)-l(w')$ since this is the complex dimension of the intersection of complex cells.

Further, suppose that $\Lambda\in\mathfrak h$ is such that the minimal point of the Morse function $F_{\Lambda}$ is equal to the unit element $e\in W(\mathfrak g)$ (this is possible since $F_\Lambda(\Psi\cdot w)=F_{\operatorname{Ad}_w(\Lambda)}(\Psi)$. So if we choose $\Lambda'=\operatorname{Ad}_w(\Lambda)$, where $w$ is the current minimal element, then $e$ will be minimum of $F_{\Lambda'}$). Then it follows from the properties of Bruhat cells and Toda system, listed at the end of the previous section, that if there exists a trajectory $k(t)$ of $T^\Lambda$ which tends to $w$ when $t\to+\infty$ and to $w'$ when $t\to-\infty$ then $w'\prec w$. Indeed, in this case the corresponding real Bruhat and dual real Bruhat cells intersect and hence complex cells intersect too.

Consider now the paths $\gamma_{w,\alpha}$ corresponding to the roots $\alpha_1,\dots,\alpha_N$. It is known from the classical Lie groups theory that there exists $t_0\in\mathbb R$ such that $\exp(t_0(e_\alpha-e_{-\alpha}))=\sigma_\alpha$, where~$\sigma_\alpha$ is the reflection of $\mathfrak h$, corresponding to $\alpha$. To verify this one can choose the eigenfunction basis. Thus, the paths $\gamma_{w,\alpha_i}$ connect $w$ and $w'=\sigma_\alpha\cdot w$. Since these paths coincide (up to reparametrization) with the trajectories of the Toda system, which is gradient system with stable points equal to~$W(\mathfrak g)$, we conclude from the definition of Bruhat order that there exist trajectories of Toda system connecting any two elements of~$W(\mathfrak g)$ which differ by only one reflection.

Let now $w'\in W(\mathfrak g)$ be any element equal to the product of exactly two reflections. Let us prove that there exists a trajectory of the Toda system connecting the unit $e\in W(\mathfrak g)$ and $w'$. But it follows from the properties of Toda system, listed in the end of previous section, that the dimension of unstable manifold of $w'$ is equal to the dimension of the corresponding Bruhat cell, i.e., equals $2$. On the other hand, there can be no trajectories entering $w'$ from the elements whose length is greater or equal to $2$ (since these points do not verify the condition $w\prec w'$). Similarly, the dimension of the space, spanned by the trajectories, connecting $w'$ with elements of length $1$, can not exceed $1$ (it follows from the first observation in this section). Thus, this space should consist of the trajectories coming from $e$ (up to a measure zero subset, spanned by the trajectories from ``intermediate'' points). It means that the set of such trajectories is nonempty, i.e., the corresponding Bruhat cell and the dual Bruhat cell intersect. It also follows that the dimension of this intersection is $2$.

Next, if $w\prec w'$ and $l(w')-l(w)=2$ we choose $\Lambda'$ so that $w$ is minimal (this is done as before by passing to the function $F_\Lambda(\Psi\cdot w)=F_{\operatorname{Ad}_w(\Lambda)}(\Psi)$ for suitable~$w$. One can also say that this amounts to the change of the set of positive roots or to the action of Weyl group on the Weyl chambers of $\mathfrak h$). In this new setting $w$ will play the role of unit in Weyl group and $w'$ will correspond to an element of length~2. Then as before, we conclude that there exist trajectories of $T^{\Lambda'}$ from $w$ to $w'$ and the space of these trajectories is $2$-dimensional. Now by the remark we made in the end of the previous section this surface is also invariant for $T^\Lambda$, thus, since $w$ is local minimum and $w'$ is maximum of $F_\Lambda$ on this surface (in fact $F_{\Lambda'}=w^*(F_\Lambda)$ and the Riemannian structure used to determine the Toda flow is (right) invariant) we conclude that there exists a~$2$-dimensional space of trajectories of $T^\Lambda$ from $w$ to $w'$. And hence the corresponding Bruhat and dual Bruhat cell intersect along a $2$-dimensional variety.

Finally, we proceed by induction: assuming that we know the existence of intersections of all cells with $l(w)-l(w')\le k$, we consider first $w'=e$, $l(w)=k+1$ and conclude (by dimension counting) that the space of trajectories from~$e$ to~$w$ is nonempty and has dimension $k+1$. Then, using the shift we reduce the general case to this one.

Observe that in addition to the main proposition we have also proved that \textit{two stable points $w$ and $w'$ of Toda system on a normal real form are connected by a trajectory iff they are comparable in Bruhat order.}

\begin{rem}It seems that the same intersection result can be obtained from the standard reasoning, based on the combinatoric considerations and on the use of Bott--Samelson varieties and their generalizations similarly to the Deodhar's construction for algebraically-closed fields, see~\cite{Deodh}.
\end{rem}

\subsection{Non-split case and further questions}
In non-split case we have the problem of identifying the real Bruhat cells with the real part of complex cells. Thus the very first stage of our inductive process fails. In fact, the Weyl group~$W(\mathfrak g,\mathfrak k)$ in this case coincides with the Weyl group of the smaller root system, namely, of the so-called Satake projection in which $\mathfrak a$ plays the role of Cartan algebra. Besides this, some of the root spaces~$\mathfrak g_\alpha$ can be multidimensional.

On the other hand, most part of the statements from the previous section remain true: the points $w\in W(\mathfrak g,\mathfrak k)$ are invariant points of Toda system, their Bruhat cells and dual Bruhat cells (or their shifted versions) are unstable and stable manifolds of the Toda system (see~\cite{Fay}) and the paths $\gamma_{\alpha,w}$ are invariant with respect to Toda flow.

Using these facts, one can make the following conjecture, concerning the structure of trajectories of Toda flow on non-split real forms, and the intersections of the Bruhat cells:
\begin{conj}
There exist trajectories connecting $w$ and $w'$ in $W(\mathfrak g,\mathfrak k)$ iff the corresponding elements are comparable in Bruhat order, in which case the space of such trajectories coincides with the intersection of Bruhat and dual Bruhat cell, and its dimension is equal to the sum:
\[
\sum_{i=1}^k\dim\mathfrak g_{\alpha_i},
\]
where $w=\sigma_{\alpha_{i_k}}\cdots\sigma_{\alpha_{i_1}}\cdot w'$. In particular, this sum does not depend on the choice of the reflections $\sigma_{\alpha_i}$ in this formula.
\end{conj}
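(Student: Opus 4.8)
The plan is to replicate the inductive scheme of the split case as closely as possible, replacing the two ingredients that are no longer available: the one-dimensional invariant curves $\gamma_{w,\alpha}$ must give way to higher-dimensional invariant submanifolds, and the complex-geometric input of Section 3 (the identification of real cells with the real loci of complex ones) must be replaced by an intrinsic, Morse-theoretic count. For a restricted simple root $\alpha$ the substitute for $\gamma_{w,\alpha}$ is the orbit $\Gamma_{w,\alpha}=K_{[\alpha]}\cdot[w]$, where $K_{[\alpha]}\subset K$ is the maximal compact subgroup of the rank-one reductive subgroup $G_{[\alpha]}$ whose Lie algebra is generated by $\mathfrak g_{\pm\alpha}$ and, when it occurs, $\mathfrak g_{\pm 2\alpha}$. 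This orbit is the ``sphere'' of the associated rank-one symmetric space; its dimension is $\dim\mathfrak g_\alpha+\dim\mathfrak g_{2\alpha}$, and in the split case it degenerates to the circle swept out by $\gamma_{w,\alpha}$. Exactly as in Proposition \ref{prop:1dim}, using that $\mathrm{ad}_{\mathfrak h}$ preserves $\mathfrak g_{\pm\alpha}\oplus\mathfrak g_{\pm2\alpha}$, one checks that $\Gamma_{w,\alpha}$ is invariant under every $T^\Lambda$ and that its only critical points are the two ``poles'' $w$ and $\sigma_\alpha\cdot w$.

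I would settle the numerical assertion -- independence of the sum on the chosen word -- before touching the dynamics. For $v\in W(\mathfrak g,\mathfrak k)$ set $\ell_{\mathrm m}(v)=\sum_{\beta\in\mathrm{Inv}(v)}\dim\mathfrak g_\beta$, where $\mathrm{Inv}(v)=\{\beta\in\Phi_+\mid v^{-1}\beta\notin\Phi_+\}$ is an invariant of $v$ alone. Since $W(\mathfrak g,\mathfrak k)$ acts on the restricted root system by isometries sending each $\beta$ into its own orbit, one has $\dim\mathfrak g_{v\beta}=\dim\mathfrak g_\beta$; combined with the standard description $\mathrm{Inv}(v)=\{\alpha_{i_1},\sigma_{\alpha_{i_1}}\alpha_{i_2},\dots\}$ attached to a reduced word $v=\sigma_{\alpha_{i_1}}\cdots\sigma_{\alpha_{i_k}}$, this gives $\ell_{\mathrm m}(v)=\sum_j\dim\mathfrak g_{\alpha_{i_j}}$, manifestly independent of the word. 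The same bookkeeping identifies $\ell_{\mathrm m}(v)$ with the honest dimension of the real Bruhat cell $X_v=B^+\cdot[v]$ in $K/Z_K(\mathfrak a)$: its tangent space at $[v]$ is the image of the nilradical $\mathfrak n^+$ of $\mathfrak b^+$, namely $\bigoplus_{\beta\in\mathrm{Inv}(v)}\mathfrak g_\beta$, of dimension $\ell_{\mathrm m}(v)$. This computation is the intrinsic replacement for the complex dimension count, and it exhibits $\ell_{\mathrm m}(v)$ as the Morse index of $F_\Lambda$ at $v$.

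With these in hand the induction runs as in Section 3. After normalising $\Lambda$ by the shift relation $F_\Lambda(\Psi\cdot w')=F_{\mathrm{Ad}_{w'}\Lambda}(\Psi)$ so that the shorter element $w'$ becomes the extremal critical point, one has $\dim X_w=\ell_{\mathrm m}(w)$ and $\dim Y_{w'}=\dim(K/Z_K(\mathfrak a))-\ell_{\mathrm m}(w')$. The submanifolds $\Gamma_{w,\alpha}$ supply trajectories between elements differing by a single reflection, and a dimension count identical to the split argument -- no trajectory may join $w'$ to $w$ through an element whose weighted length equals or exceeds $\ell_{\mathrm m}(w)$, while the contribution of each ``intermediate'' pole is controlled by the corresponding $\Gamma$ -- forces the space of trajectories joining $w'$ and $w$ to be nonempty, of dimension $\ell_{\mathrm m}(w)-\ell_{\mathrm m}(w')$, and to coincide with $X_w\cap Y_{w'}$.

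The main obstacle is precisely the step the complex theory handled for free: the sharp bound on $\dim(X_w\cap Y_{w'})$ together with the necessity of comparability. Both would follow from a Morse--Smale transversality statement for the full symmetric Toda flow on the non-split flag variety, for then $X_w$ and $Y_{w'}$ meet transversally with $\dim(X_w\cap Y_{w'})=\ell_{\mathrm m}(w)-\ell_{\mathrm m}(w')$, and the intersection is forced to be empty as soon as this number would be negative, i.e. as soon as $w'\not\prec w$. Establishing this transversality \emph{intrinsically}, without importing it from $\mathfrak g_\mathbb C$, is the crux: it amounts to controlling the Hessian of $F_\Lambda$ at each $v\in W(\mathfrak g,\mathfrak k)$, showing its negative eigenspace to be exactly $\bigoplus_{\beta\in\mathrm{Inv}(v)}\mathfrak g_\beta$, and checking that the resulting stable and unstable distributions are in general position. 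The multidimensionality of the $\mathfrak g_\beta$ and the presence of the multiples $2\beta$ in type $BC$ enter precisely here, through the local geometry of the rank-one orbits $\Gamma_{w,\alpha}$; this is where I expect the real work of a complete proof to concentrate, and it is also what must be made precise to pin down the correct reading of the sum $\sum_i\dim\mathfrak g_{\alpha_i}$ in the non-reduced case.
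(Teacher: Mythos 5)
Note first that the statement you were asked to prove is stated in the paper as a \emph{conjecture}: the authors explicitly say that in the non-split case ``the very first stage of our inductive process fails'' and offer no proof, only the list of facts that survive (the points of $W(\mathfrak g,\mathfrak k)$ are fixed points, the cells are stable/unstable manifolds, the curves $\gamma_{w,\alpha}$ remain invariant). So there is no paper proof to compare against, and your text has to be judged as an attempt to settle an open problem. Its sound parts are genuinely useful: the rank-one orbits $\Gamma_{w,\alpha}$ as higher-dimensional replacements for the invariant circles, the weighted length $\ell_{\mathrm m}(v)=\sum_{\beta\in\mathrm{Inv}(v)}\dim\mathfrak g_\beta$ with its word-independence via Weyl-invariance of restricted root multiplicities, and the identification of $\ell_{\mathrm m}(v)$ with $\dim X_v$ and the Morse index of $F_\Lambda$ at $v$ are all correct and in fact sharpen the conjecture's somewhat loose formula (in type $BC$ your computation shows the sum must be read with $\dim\mathfrak g_{\alpha}$ replaced by $\dim\mathfrak g_{\alpha}+\dim\mathfrak g_{2\alpha}$, a point the conjecture as stated glosses over; you rightly flag this).

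The genuine gap is the one you name yourself, and it is not a peripheral technicality but the entire content of the conjecture. In the split case the paper's argument is \emph{not} self-contained Morse theory: the necessity direction ($X_w\cap Y_{w'}\neq\emptyset\Rightarrow w'\prec w$) and the sharp bound $\dim(X_w\cap Y_{w'})\le l(w)-l(w')$ are imported wholesale from the complex Bruhat decomposition, because real cells sit inside complex ones. Your proposed substitute, a Morse--Smale transversality property of $T^\Lambda$ on $K/Z_K(\mathfrak a)$, is asserted, not proved, and without it the inductive dimension count is circular: at the first nontrivial step you cannot exclude trajectories reaching $w$ from incomparable elements of equal or larger weighted length, nor bound the dimension of the trajectory spaces through intermediate poles, so the ``forced nonemptiness'' conclusion never gets off the ground. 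Controlling the Hessian of $F_\Lambda$ at each $v$ (to see its negative eigenspace is $\bigoplus_{\beta\in\mathrm{Inv}(v)}\mathfrak g_\beta$) is plausible and probably tractable, but general position of the resulting stable and unstable manifolds for a \emph{specific} gradient flow is a strong global statement that does not follow from local index computations. What you have written is therefore a credible research program with correct preparatory lemmas, not a proof; it leaves open exactly the step whose absence is the reason the paper demotes this statement to a conjecture.
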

Another interesting question is related to the fact that in Deodhar's work~\cite{Deodh} one obtains further decomposition of the intersections of Bruhat cells into smaller algebraic varieties. One can ask if these varieties can also be somehow characterized in terms of the Toda system.

\subsection*{Acknowledgments}
The authors thank Sergey Loktev, Vladimir Rubtsov, Evgeniy Smirnov for useful discussions. The authors would like to express their gratitude to the anonymous referee, whose remarks have helped us to improve the presentation of this paper.

The work of Yu.B.~Chernyakov was partly supported by the grant RFBR-18-02-01081. The work of G.I.~Sharygin was partly supported by the grant RFBR-18-01-00398. The work of D.V.~Talalaev was partly supported by the grant Leader(math) 20-7-1-21-1 of the foundation for the advancement of theoretical physics and mathematics ``BASIS'' and within the framework of a development program for the Regional Scientific and Educational Mathematical Center of the Yaroslavl State University with financial support from the Ministry of Science and Higher Education of the Russian Federation (Agreement No.~075-02-2020-1514/1 additional to the agreement on provision of subsidies from the federal budget No.~075-02-2020-1514).

\pdfbookmark[1]{References}{ref}
\LastPageEnding

\end{document}